\newtheorem{theorem}{Theorem}
\newtheorem{lemma}{Lemma}
\newtheorem{definition}{Definition}
\newtheorem{remark}{Remark}
\begin{document}
\title{Tate-Betti and Tate-Bass numbers}
\author{E. Enochs and S. Estrada and A. Iacob}
\thanks{2010 {\it Mathematics Subject Classification}. 13H10,18G25, 18G35, 13D02.}
\thanks{key words: Tate-Betti and Tate-Bass number, complete projective resolution, eventually periodic complex, Matlis duality}

\begin{abstract}
We define Tate-Betti and Tate-Bass invariants for modules over a commutative noetherian local ring $R$. Then we show the periodicity of these invariants provided that $R$ is a hypersurface. In case $R$ is also Gorenstein, we see that a finitely generated $R$-module $M$ and its Matlis dual have the same Tate-Betti and Tate-Bass numbers.
\end{abstract}

\maketitle

\section{Introduction}

We consider a commutative noetherian local ring $(R, \mathfrak{m}, k)$.\\
It is known that a module $M$ has a complete projective resolution $T \rightarrow P \rightarrow M$ if and only if $M$ has finite Gorenstein projective dimension. We prove that when $M$ is a finitely generated $R$-module of finite Gorenstein projective dimension we can construct a complete projective resolution $T \rightarrow P \rightarrow M$ with both $T$ and $P$ homotopically minimal complexes, and so unique up to isomorphism. Then each of the modules $P_n$ ($n \ge 0$) and $T_n$ ($n \in \mathbb{Z}$) are free modules of finite ranks. The ranks of the modules $P_n$ are usually denoted $\beta_n(M)$ and are called the Betti numbers of $M$. The boundedness of the sequence of Betti numbers of a module $M$, as well as the interplay between the boundedness of the Betti numbers and the eventual periodicity of the module $M$ have been studied intensively (see for example \cite{eisenbud:80:complete.intersections},  \cite{peeva:90:boundedness}, \cite{gulliksen:69:minimal}, \cite{peeva:80:exp.growth.betti}, and \cite{bergh:80:exp.growth.betti}). \\
We focus here on the invariants $\widehat{\beta}_n(M)$, where for each $n \in \mathbb{Z}$, $\widehat{\beta}_n(M)$ is the rank of the module $T_n$. We call these invariants the Tate-Betti numbers of $M$ (see \cite{christensen:14:tate} for another way to define these invariants).\\
For an arbitrary module $N$ we can use an analogous procedure to construct a complete injective resolution $N \rightarrow I \rightarrow U$ where both $I$ and $U$ are homotopically minimal complexes (and hence unique up to isomorphism). Then we can define the Tate-Bass invariants $\widehat{\mu}^n (\mathfrak{p}, N)$ for $n \in \mathbb{Z}$ and $\mathfrak{p} \subset R$ a prime ideal of $R$. \\
Our main results (Theorem 2 and Theorem 3) give sufficient conditions on the residue field $k$ that guarantee the periodicity of the Tate-Betti numbers $\widehat{\beta}_n(M)$ (where $M$ is finitely generated of finite Gorenstein projective dimension) and respectively the periodicity of the Tate-Bass numbers $\widehat{\mu}^n (\mathfrak{m}, N)$. We prove (Theorem 2) that if $k$ has an eventually periodic minimal projective resolution with period $s$, then for every finitely generated $R$-module $M$ we have that the Tate-Betti invariants of $M$ are periodic of period $s$. We also prove (Theorem 3) that under the same hypothesis on $k$ we have that the Tate-Bass invariants $\widehat{\mu}^n (\mathfrak{m}, N)$ are periodic of period $s$, for every module $N$ of finite Gorenstein injective dimension.\\

In the second part of the paper we consider a commutative local Gorenstein ring $(R, \mathfrak{m}, k)$ and a finitely generated $R$-module $M$. We prove that if $T \rightarrow P \rightarrow M$ is a minimal complete projective resolution of $M$, then $M^\nu \rightarrow P^\nu \rightarrow T^\nu$ is a minimal complete injective resolution of $M^\nu$. It follows that for each $n$ we have $\widehat{\mu}_n(\mathfrak{m}, M^\nu) = \widehat{\beta}_n$. Also, $\mu_n(\mathfrak{m}, M^\nu) = \beta_n$ (see Section 3 for definitions).

\section{Preliminaries}

%\begin{definition}
We recall that a module $G$ is Gorenstein projective if there is an exact and $Hom
(-, Proj)$ exact complex $ \ldots \rightarrow P_1 \rightarrow P_0
\rightarrow P_{-1} \rightarrow P_{-2}\rightarrow \ldots $ of
projective modules such that $G = Ker(P_0 \rightarrow P_{-1})$.

\begin{definition}
 A module $_RM$ has finite Gorenstein projective dimension if there exists an exact sequence $0 \rightarrow G_n \rightarrow G_{n-1} \rightarrow \ldots \rightarrow G_1 \rightarrow G_0 \rightarrow M \rightarrow 0$ with all $G_i$ Gorenstein projective modules. If the integer $n \ge 0$ is the least with this property then $M$ has Gorenstein projective dimension $n$ (in short, $G.p.d._R(M)=n$). If no such $n$ exists then $M$ has infinite Gorenstein projective dimension.\\ The Gorenstein injective modules, and Gorenstein injective dimension are defined dually.
\end{definition}
The Tate cohomology modules are defined by means of complete resolutions. We recall the definition:\\

\begin{definition}
A module $M$ has a complete projective resolution if there exists a diagram $T \xrightarrow{u} P \rightarrow M $ with $P \rightarrow M$ a projective resolution, $T$ a totally acyclic complex and $u:T \rightarrow P$ a map of complexes such that $u_n:T_n \rightarrow P_n$ is an isomorphism for all $n \gg 0$.
\end{definition}

It is known that a module $M$ has a complete projective resolution if and only if and only if it has finite Gorenstein projective dimension. In particular, when $R$ is Gorenstein, every $_RM$ has a complete projective resolution.\\
Complete injective resolutions are defined dually. It is known that a module $_RN$ has a complete injective resolution if and only if $N$ has finite Gorenstein injective dimension. Over a Gorenstein ring $R$, every module $N$ has such a complete injective resolution.

\section{Tate-Betti numbers and Tate-Bass numbers}

Let $R$ be a commutative local noetherian ring, %of finite Krull dimenion $n$,
 and let $M$ be an $R$-module of finite Gorenstein projective dimension.  Then there is a complete projective resolution of $M$, $T \rightarrow P \rightarrow M$.
%a diagram
%\begin{center}
%{$ T\rightarrow P\rightarrow M$}
%\end{center}
%as in Christensen-Jorgensen.
%It is classical that $P$ is unique up to
%a homotopy isomorphism, but by Veliche, so is $T$.
 If $M$ is finitely generated
 then we can choose $P$ to be a minimal projective resolution of $M$ (\cite{enochs:00:relative}).\\
 We recall (\cite{enochs:00:relative2}) that a complex $C$ is said to be homologically minimal if any homology isomorphism $f:C \rightarrow C$ is an isomorphism in $C(R-Mod)$.
 A complex $C$ is said to be homotopically minimal if each homotopy isomorphism $f:C \rightarrow C$ is an isomorphism. So if $C$ is homologically minimal, it is also homotopically minimal. \\
 Thus a minimal projective resolution $P $ of $M$, as above,
is homotopically minimal and in fact homologically minimal (see page 78 in
chapter 8 of \cite{enochs:00:relative2}), and so such a $P$ is unique
up to isomorphism.\\
We show first of all that when $M$ is finitely generated we can
also get $T$ to be homotopically minimal, and so also unique up to
isomorphism.\\

We will use the following\\

\begin{lemma} Let $K$ be a finitely generated Gorenstein projective reduced $R$-module (i.e. $K$ has no nontrivial projective direct summands). Then there exists an exact and $Hom(-,Proj)$ exact complex $0 \rightarrow K \rightarrow Q_0 \rightarrow Q_{-1} \rightarrow Q_{-2} \rightarrow \ldots$ with each $Q_n$ a finitely generated free module.\\
\end{lemma}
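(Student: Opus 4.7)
The plan is to use Auslander--Bridger duality. Since $K$ is a finitely generated Gorenstein projective module over the commutative noetherian local ring $R$, it is totally reflexive: the biduality map $K \to K^{\ast\ast}$ is an isomorphism, and the $R$-dual $K^\ast = \Hom(K,R)$ is again a finitely generated Gorenstein projective module (in particular $Ext^{\,i}_{R}(K^\ast, R) = 0$ for all $i \ge 1$). I take these facts as standard for finitely generated Gorenstein projectives over commutative noetherian rings; they follow from the equivalence between Enochs' notion of Gorenstein projective and the classical Auslander--Bridger notion of totally reflexive in the finitely generated setting.

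First I would choose a resolution $\cdots \to F_2 \to F_1 \to F_0 \to K^\ast \to 0$ of $K^\ast$ by finitely generated free $R$-modules, which exists because $R$ is noetherian and $K^\ast$ is finitely generated. Applying $\Hom(-,R)$ produces the sequence $0 \to K^{\ast\ast} \to F_0^\ast \to F_1^\ast \to F_2^\ast \to \cdots$, and the vanishing $Ext^{\,i}_{R}(K^\ast,R) = 0$ for $i \ge 1$ makes this sequence exact. Identifying $K^{\ast\ast} \cong K$ via reflexivity, I obtain an exact complex $0 \to K \to Q_0 \to Q_{-1} \to Q_{-2} \to \cdots$ with $Q_{-i} := F_i^\ast$ a finitely generated free module, as required.

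What remains is to verify $\Hom(-,Proj)$ exactness. Fix an arbitrary projective $R$-module $P$. Because each $F_i$ is finitely generated free, there is a natural isomorphism $\Hom(F_i^\ast, P) \cong F_i \otimes_R P$; and because $K$ is finitely presented (being finitely generated over the noetherian ring $R$) and $P$ is flat, the evaluation map $K^\ast \otimes_R P \to \Hom(K, P)$ is also an isomorphism. Applying $\Hom(-,P)$ to the candidate complex therefore yields, after these identifications, the sequence $\cdots \to F_1 \otimes_R P \to F_0 \otimes_R P \to K^\ast \otimes_R P \to 0$, which is exact because $P$ is flat and $\cdots \to F_1 \to F_0 \to K^\ast \to 0$ is exact.

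The main obstacle I expect is the book-keeping of the Auslander--Bridger formalism, namely justifying reflexivity of $K$ and Gorenstein projectivity of $K^\ast$ in the setting at hand, and verifying that the natural isomorphisms $\Hom(F_i^\ast,P) \cong F_i \otimes_R P$ and $K^\ast \otimes_R P \cong \Hom(K,P)$ are compatible with the differentials of the complex; once these identifications are in place the proof reduces to flatness. I note that the hypothesis that $K$ has no nontrivial projective direct summands does not appear to be needed for this existence statement; it is presumably relevant only for the homotopically minimal version of the construction used in what follows.
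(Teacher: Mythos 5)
Your proposal is correct, but it takes a genuinely different route from the paper's proof. You dualize an entire finitely generated free resolution of $K^*$ in one shot, invoking the Auslander--Bridger facts (reflexivity of $K$, Gorenstein projectivity of $K^*$, vanishing of $Ext^{\,i}_R(K^*,R)$ for $i\geq 1$), and then get $\Hom(-,Proj)$-exactness from the evaluation isomorphisms $\Hom(F_i^*,P)\cong F_i\otimes_R P$ and $\Hom(K,P)\cong K^*\otimes_R P$ plus flatness of $P$; this is sound, and note that the paper itself assumes the same unproved duality facts (it asserts without proof that $K^*$ is again finitely generated, Gorenstein projective and reduced, and silently identifies $K$ with $K^{**}$), so you are not presupposing more than the authors do. The paper instead argues one step at a time: it takes a projective cover $P\rightarrow K^*$ with kernel $L$, dualizes the short exact sequence to get $0\rightarrow K^{**}\rightarrow P^*\rightarrow L^*\rightarrow 0$, observes that $K\cong K^{**}\rightarrow P^*$ is a projective preenvelope, replaces it by the projective envelope $K\rightarrow Q$ (an injection with cokernel again finitely generated, Gorenstein projective and reduced), and iterates. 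The difference in payoff matters: your argument is shorter and confirms your closing observation that reducedness is irrelevant to the bare existence statement; but the paper's envelope-by-envelope construction yields, via Proposition 10.2.10 of \cite{enochs:00:relative}, that each $Q_j\rightarrow K_{j-1}$ is a projective cover, i.e.\ a \emph{minimal} coresolution. That minimality is not claimed in the lemma, yet it is precisely what the paper uses immediately afterwards to conclude that the totally acyclic complex $T$ is homologically minimal, hence unique up to isomorphism --- the property underpinning the definition of the Tate-Betti numbers. So your proof establishes the lemma as stated, but to support the application that follows one would need either the paper's construction or a refinement of yours (e.g.\ dualizing a \emph{minimal} free resolution of $K^*$ and checking that the dual differentials land in $\mathfrak{m}$ times the next term).
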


\begin{proof}% To see this let
%\begin{center}
%{$ 0\rightarrow K_{m+1}\rightarrow P_m \rightarrow \cdots \rightarrow
%P_0 \rightarrow 0$}
%\end{center}
%be a partial minimal projective resolution of $M$ but where $m\geq n$.
%Then by [RHA], Theorem 10.2.14, %a result that is essentially dual to Theorem 4.2 of "Gorenstein
%Injective and Projective Modules",
%$K_{m+1}$ is both Gorenstein projective
%and reduced (i.e. has no nontrivial projective direct summands).\\
Let $K$ be finitely generated, Gorenstein projective and reduced.
Then the dual $K^*=Hom(K,R)$ is also such. Also if $0\rightarrow K'
\rightarrow P\rightarrow K\rightarrow 0$ is exact where $P\rightarrow K$
is a projective cover of $K$, then $K'$ is also finitely generated, Gorenstein
projective and reduced. \\
Since the dual module, $K^*$, is a finitely generated Gorenstein projective module that is also reduced there exists a short exact sequence $0 \rightarrow L \rightarrow P \rightarrow K^* \rightarrow 0$ with $P \rightarrow K^*$ a projective cover, and with $L$ Gorenstein projective finitely generated and reduced. This gives an exact sequence $0 \rightarrow K^{**} \rightarrow P^* \rightarrow L^* \rightarrow 0$ with $P^*$ finitely generated projective, and with $L^*$ finitely generated Gorenstein projective and reduced. Then $K^{**} \rightarrow P^*$ is a projective preenvelope, and therefore if $K^{**} \rightarrow Q$ is the projective envelope then $K^{**} \rightarrow Q$ is an injective map; also, since $coker(K\rightarrow Q)$ is a direct summand of $L^*$, it is finitely generated Gorenstein projective and reduced.
%Using the fact that $K$ is reflexive ($K\cong K^{**}$ naturally),
%that the dual $K^*$ is finitely generated Gorenstein projective and
%reduced, we
%can also argue that
So $K$ has a projective envelope $K\rightarrow Q$
where $K\rightarrow Q$ is an injection and where $coker(K\rightarrow Q)$
is also finitely generated, Gorenstein projective and reduced.
Also, if $0\rightarrow K\rightarrow Q \rightarrow K_0 \rightarrow
0$ is exact, then $Q\rightarrow K_0$ is a projective cover (\cite{enochs:00:relative}, Proposition 10.2.10).\\
%(Since the dual module, $K^*$, is a finitely generated Gorenstein projective module that is also reduced there exists a short exact sequence $0 \rightarrow L \rightarrow P \rightarrow K^* \rightarrow 0$ with $P \rightarrow K^*$ a projective cover, and with $L$ Gorenstein projective finitely generated and reduced. This gives an exact sequence $0 \rightarrow K^{**} \rightarrow P^* \rightarrow L^* \rightarrow 0$ with $P^*$ finitely generated projective, and with $L^*$ finitely generated Gorenstein projective and reduced. Then $K^{**} \rightarrow P^*$ is a projective preenvelope, and therefore if $K^{**} \rightarrow Q$ is the projective envelope then $K^{**} \rightarrow Q$ is an injective map; also, since $coker(K\rightarrow Q)$ is a direct summand of $L^*$, it is finitely generated Gorenstein projective and reduced.)
 So there exists a short exact sequence $0 \rightarrow K \rightarrow Q \rightarrow K_0 \rightarrow 0$ with $K \rightarrow Q$ a projective preenvelope and with $K_0$ a finitely generated reduced Gorenstein projective $R$-module.\\
Continuing, we obtain an exact and $Hom(-,Proj)$-exact complex $0 \rightarrow K \rightarrow Q \rightarrow Q_{-1} \rightarrow Q_{-2} \rightarrow \ldots$ with each $Q_n$ a finitely generated free $R$-module.
\end{proof}

We can prove now that when $M$ is a finitely generated $R$-module of finite Gorenstein projective dimension we can construct a complete projective resolution $T \rightarrow P \rightarrow M$ with both $T$ and $P$ unique up to isomorphism. \\
To see this let
\begin{center}
{$ 0\rightarrow K_m\rightarrow P_{m-1} \rightarrow \cdots \rightarrow
P_0 \rightarrow 0$}
\end{center}
be a partial minimal projective resolution of $M$ but where $m=G.p.d._RM$. %$m\geq n$ (where $n$ is the Krull dimension of $R$).
Then %a result that is essentially dual to Theorem 4.2 of "Gorenstein
%Injective and Projective Modules",
$K_m$ is Gorenstein projective
and  since the resolution is minimal, $K_m$ is also reduced (i.e. has no nontrivial projective direct summands).\\
Using Lemma 1 above it is not hard to see that there exists an exact and $Hom(-, Proj)$ exact complex $T$ with each $T_n$ finitely generated free module, and with $K_m= Ker(T_{m-1} \rightarrow T_{m-2})$. Since for each d, the complex $\ldots \rightarrow T_{d+2} \rightarrow T_{d+1} \rightarrow K_d \rightarrow 0$ is a minimal projective resolution, it follows that $T$ is a homologically minimal complex (\cite{enochs:00:relative2}, page 78).\\

So for a finitely generated $M$
we can construct a complete projective resolution
\begin{center}
{$T\rightarrow P\rightarrow M$}
\end{center}
 where both $T$ and $P$ are homotopically minimal, and so unique up to
isomorphism. We call such a diagram a \emph{minimal complete projective resolution}
of $M$.\\
Then each of $P_n$ ($n\geq 0$) and $T_n$ ($n$ arbitrary) are free modules of
finite rank. As usual, the ranks of the $P_n$ are denoted $\beta_n(M)$. We denote
the ranks of the $T_n$ by $\widehat{\beta}_n (M)$. The numbers $\beta_n(M)$ are called
the Betti invariants of $M$.
We call the invariants $\widehat{\beta}_n(M)$ the Tate-Betti
invariants of $M$ (see \cite{christensen:14:tate} for another way
to define these invariants).\\

For an arbitrary module $N$ we can use an analogous procedure to
construct a complete injective resolution
\begin{center}
{$ N\rightarrow I \rightarrow U$}
\end{center}
where both $I$ and $U$ are homotopically minimal complexes (and hence
unique up to isomorphism). Then using Matlis and Bass, we can define
the Bass invariants $\mu^n ({\mathfrak p}, N) $ for $n\geq 0$ and ${\mathfrak
p}\subset R$ a prime ideal of $R$,  and then the Tate-Bass invariants
$\widehat{\mu}^n ({\mathfrak p}, N)$ for arbitrary $n$ and $\mathfrak p$ a
prime ideal.\\

Our main results are about the periodicity of these invariants. We recall first the following:\\
\begin{definition} A complex $C=(C_n)$ is said to be eventually periodic
of period $s\geq 1$ if for some $n_0$ we have that for all $n\geq n_0$
that
\begin{center}
{$ (C_{n+1}\rightarrow C_n \rightarrow C_{n-1}) \cong (C_{n+s+1} \rightarrow
C_{n+s}\rightarrow C_{n+s-1})$}
\end{center}
\end{definition}

%If we use superscripts in denoting $C$, i.e. write $C=(C^n)$, then the
%requirement is that
%\begin{center}
%{$ (C^{n-1}\rightarrow C^n \rightarrow C^{n+1})\cong (C^{n-s+1}\rightarrow
%C^{n+s}\rightarrow C^{n+s-1})$}
%\end{center}
%for all $n\geq n_0$.
Saying that $C$ is periodic of period $s$ will have
the obvious meaning.\\

\begin{remark} If $T\rightarrow P\rightarrow M$ is a minimal complete
projective resolution of a finitely generated $M$ and if $P$ is
eventually periodic, then trivially $T$ is also eventually periodic. But
using the minimality of $T$ it can be seen that in fact $T$ is periodic.
If this is the case where the period is $s$ then we see that
$\widehat{\beta}_n (M)=\widehat{\beta}_{n+s}(M) $ for all $n$. So we can say
that the Tate-Betti invariants are periodic of period $s$.\\
However it may happen that the Tate-Betti invariants of $M$ are periodic
without $T$ being periodic. So we can speak of the invariants being
periodic without the associated complex being periodic.\\
\end{remark}

We prove that when the residue field $k$ has an eventually periodic minimal projective resolution the Tate-Betti numbers of any finitely generated $_RM$ of finite Gorenstein projective dimension are periodic. Then we prove that under the same hypothesis on $k$, the Tate-Bass numbers $\widehat{\mu}^n (\mathfrak{m}, N)$ are periodic, for any $_RN$ of finite Gorenstein injective dimension.

We will use \cite{enochs:12:balance.with.unbounded} to deduce the following balance result
(see also \cite{christensen:14:tate}, Section 5).\\

\begin{theorem}\label{aside} Let $R$ be any commutative ring. If $T\rightarrow P\rightarrow M$ and $N \rightarrow I \rightarrow U$  are complete
projective and injective resolutions of $M$ and $N$ respectively (equivalently $G.p.d._R(M)$ and $G.i.d._R(N)$ are finite). Then
the homologies of $Hom(T,N)$ and of $Hom(M, U)$  are naturally isomorphic.\\
The analogous result for $T\otimes N$ and $M\otimes U$ also holds.
\end{theorem}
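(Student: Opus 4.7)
The plan is to exhibit a natural comparison between $\Hom(T,N)$ and $\Hom(M,U)$ via an intermediate double complex, and then invoke the balance theorem for unbounded complexes from \cite{enochs:12:balance.with.unbounded} to show the comparison is a quasi-isomorphism.

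First, I would form the double complex $\Hom(T, I)$, where $I$ is the injective resolution appearing in the complete injective resolution $N\to I\to U$. Filtering in the $I$-direction and using that each $T_p$ is projective (so $\Hom(T_p,-)$ sends the quasi-isomorphism $N\to I$ to a quasi-isomorphism of complexes), one expects the total complex of $\Hom(T,I)$ to be quasi-isomorphic to $\Hom(T,N)$. Dually, I would form $\Hom(P, U)$, where $P$ is the projective resolution in the complete projective resolution $T\to P\to M$, and use that each $U^q$ is injective (so $\Hom(-, U^q)$ sends the resolution $P\to M$ to a quasi-isomorphism) to identify the total complex of $\Hom(P,U)$ with $\Hom(M, U)$. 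To link $\Hom(T, I)$ and $\Hom(P, U)$ I would exploit that $T\to P$ and $I\to U$ are isomorphisms in sufficiently high degrees, so the relevant discrepancies live in bounded ranges and give natural quasi-isomorphisms between the two total complexes.

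The main obstacle is the convergence of the spectral sequences involved: since $T$ is unbounded below and $U$ is unbounded above, the row and column filtrations of the double complexes in play are not a priori strongly convergent, and one cannot naively read off the $E_2$-terms as the homology of the totals. This is exactly the technical point addressed in \cite{enochs:12:balance.with.unbounded}, whose main balance theorem supplies the necessary convergence under hypotheses that are satisfied in our setting. The plan is therefore to invoke that result directly to upgrade the formal comparisons above to genuine isomorphisms on homology; naturality then follows from the naturality of each comparison map.

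For the tensor-product statement I would run the parallel argument, with $\Hom$ replaced by $\otimes$ throughout and the double complex $T\otimes I$ (or a flat replacement obtained via Matlis duality) playing the intermediate role. Each of the three comparison steps has an exact $\otimes$-analogue, and the same unbounded-balance result from \cite{enochs:12:balance.with.unbounded} again guarantees the convergence needed to conclude that $H_*(T\otimes N)$ and $H_*(M\otimes U)$ are naturally isomorphic.
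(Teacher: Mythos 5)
There is a genuine gap, and it sits in your linking step. Even if one grants your two convergence claims (themselves the delicate point), there is no direct comparison map between $\mathrm{Tot}(\Hom(T,I))$ and $\mathrm{Tot}(\Hom(P,U))$: since $\Hom$ is contravariant in its first argument and covariant in its second, the maps $u\colon T\rightarrow P$ and $v\colon I\rightarrow U$ only produce a zigzag $\Hom(T,I)\longleftarrow \Hom(P,I)\longrightarrow \Hom(P,U)$ through the first-quadrant double complex $\Hom(P,I)$, whose total homology is the ordinary $Ext^{*}_{R}(M,N)$. On homology the two legs of this zigzag are the canonical maps from ordinary Ext to Tate cohomology (computed via $T$, respectively via $U$), and these are isomorphisms only in degrees above $G.p.d._R(M)$, resp.\ $G.i.d._R(N)$; they fail in low and negative degrees, where $\widehat{Ext}$ is generally nonzero while ordinary Ext vanishes. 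So ``the discrepancies live in bounded ranges'' cannot yield quasi-isomorphisms --- a map that induces isomorphisms on homology only outside a bounded range of degrees is not a quasi-isomorphism --- and if both legs \emph{were} quasi-isomorphisms, the theorem would collapse to the false assertion that ordinary and Tate cohomology agree in all degrees. Moreover, the convergence problem you defer to \cite{enochs:12:balance.with.unbounded} is not solved there in the form you need: what the present paper actually invokes (Corollary 3.4 of that reference) is a balance isomorphism for a Gorenstein projective \emph{module} against a Gorenstein injective \emph{module}, not a convergence theorem for half-plane double complexes.

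The missing idea is dimension shifting, which is how the paper makes that citation applicable. Assume (without loss of generality) $m=G.p.d._R(M)\geq n=G.i.d._R(N)$, and set $K_m=\Ker(T_m\rightarrow T_{m-1})$ and $C_{-m}=\Ker(U_{-m}\rightarrow U_{-m-1})$; because $T\rightarrow P$ and $I\rightarrow U$ are isomorphisms in high degrees, these are an honest syzygy of $M$ (Gorenstein projective, with a shift of $T$ as complete projective resolution) and an honest cosyzygy of $N$ (Gorenstein injective, with a shift of $U$ as complete injective resolution). Since $\Hom(T,E)$ is exact for every injective module $E$ and $\Hom(Q,U)$ is exact for every projective module $Q$, breaking the resolutions into short exact sequences gives $H^i(\Hom(T,N))\cong H^{i-m-1}(\Hom(T,C_{-m}))$ and $H^{i-m-1}(\Hom(K_m,U))\cong H^i(\Hom(M,U))$, and the remaining middle isomorphism $H^{i-m-1}(\Hom(T,C_{-m}))\cong H^{i-m-1}(\Hom(K_m,U))$ is exactly the module case settled by Corollary 3.4 of \cite{enochs:12:balance.with.unbounded}. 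The tensor statement follows by the same shift. This reduction to a Gorenstein projective/Gorenstein injective pair of modules is what your outline needs instead of (not in addition to) the three double-complex comparisons.
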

\begin{proof}
Without lost of generality let us assume $G.p.d._R(M)=m$ and $G.i.d._R(N)=n$ and $m\geq n$.
$$H^i(Hom(T,N))=H^{i-m-1}(Hom(T,C_{-m}))\cong $$ $$\cong H^{i-m-1}(Hom(K_m,U))= H^i(Hom(M,U)),$$
where $C_{-m}=Ker(U_{-m}\to U_{-m-1})$, $K_m=Ker(T_m\to T_{m-1}$ and $\cong$ follows from  (\cite{enochs:12:balance.with.unbounded}, Corollary 3.4). Note that if case $m>n$, we are using the fact that the class of Gorenstein injective modules is closed under cokernels of monomorphisms. The second statement follows in the same way.
\end{proof}

%An aside. We note that there was an incomplete proof of this claim
%in Enochs-Estrada-Iacob. At the end of this paper this incompleteness will
%be remedied.\\
%{\bf Defition.} A complex $C=(C_n)$ is said to be eventually periodic
%of period $s\geq 1$ of for some $n_0$ we have that for all $n\geq n_0$
%that
%\begin{center}
%{$ (C_{n+1}\rightarrow C_n \rightarrow C_{n-1}) \cong (C_{n+s+1} \rightarrow
%C_{n+s}\rightarrow C_{n+s-1})$}
%\end{center}
%If we use superscripts in denoting $C$, i.e. write $C=(C^n)$, then the
%requirement is that
%\begin{center}
%{$ (C^{n-1}\rightarrow C^n \rightarrow C^{n+1})\cong (C^{n-s+1}\rightarrow
%C^{n+s}\rightarrow C^{n+s-1})$}
%\end{center}
%for all $n\geq n_0$. Saying that $C$ is periodioc of period $s$ will have
%the obvious meaning.\\
%{\bf Remark.} If $T\rightarrow P\rightarrow M$ is a minimal complete
%projective resolution of a finitely generated $M$ and if $P$ is
%eventually periodic, then trivially $T$ is also eventually periodic. But
%using the minimality of $T$ it can be seen that in fact $T$ is periodic.
%If this is the case where the period is $s$ then we see that
%$\widehat{\beta}_n (M)=\widehat{\beta}_{n+s}(M) $ for all $n$. So we can say
%that the Tate-Betti invariants are periodic of period $s$.\\
%However it may happen that the Tate-Betti invariants of $M$ are periodic
%without $T$ being periodic. So we can speak of the invariants being
%periodic without the associated complex being periodic.
%We will use similar terminology concerning the Tate-Bass invariants.\\
%We have\\
We can prove now our main results.\\
\begin{theorem} If the residue field $k$ of $R$ as an $R$-module has an eventually periodic
minimal projective resolution (with the period being $s$) then
for every finitely generated module $M$ of finite Gorenstein projective dimension we have that the Tate-Betti
invariants of $M$ are periodic of period $s$.\\
\end{theorem}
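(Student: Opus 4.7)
The plan is to identify the Tate-Betti numbers with dimensions of Tate-Tor modules and then transfer the eventual periodicity of the minimal projective resolution of $k$ into periodicity of those Tate-Tor modules via an iterated dimension-shift.

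First, since $T$ is homotopically minimal, every differential of $T$ takes values in $\mathfrak{m}T$, so $T\otimes_R k$ has zero differentials; consequently
\[ \hTor{n}{R}(M,k)\;=\;H_n(T\otimes_R k)\;=\;T_n\otimes_R k\;\cong\;k^{\widehat{\beta}_n(M)} \]
for every $n\in\mathbb{Z}$. Thus $\widehat{\beta}_n(M)=\dim_k\hTor{n}{R}(M,k)$, and the theorem reduces to showing that $\hTor{n}{R}(M,k)$ is periodic in $n$ of period $s$.

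The key tool is a dimension-shift for Tate-Tor. For any short exact sequence $0\to\Omega N\to Q_0\to N\to 0$ with $Q_0$ projective, tensoring with the acyclic complex $T$ of projectives produces a short exact sequence of complexes whose middle term $T\otimes_R Q_0$ is acyclic (since $Q_0$ is flat and $T$ is acyclic). The associated long exact homology sequence collapses to isomorphisms $\hTor{n}{R}(M,N)\cong\hTor{n-1}{R}(M,\Omega N)$, and iterating gives $\hTor{n}{R}(M,N)\cong\hTor{n-j}{R}(M,\Omega^{j}N)$ for every $j\ge 0$, where $\Omega^{j}N$ denotes the $j$-th syzygy of $N$ in a minimal projective resolution.

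Applying this with $N=k$: the hypothesis furnishes an $n_0$ large enough that $(Q_{n_0+1}\to Q_{n_0})\cong (Q_{n_0+s+1}\to Q_{n_0+s})$, and taking kernels yields $\Omega^{n_0}k\cong\Omega^{n_0+s}k$. Using the dimension-shift with $j=n_0$ and with $j=n_0+s$ then gives
\[ \hTor{n}{R}(M,k)\cong\hTor{n-n_0}{R}(M,\Omega^{n_0}k)\cong\hTor{n-n_0}{R}(M,\Omega^{n_0+s}k)\cong\hTor{n+s}{R}(M,k) \]
for every $n\in\mathbb{Z}$, and taking $k$-dimensions yields $\widehat{\beta}_n(M)=\widehat{\beta}_{n+s}(M)$.

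The only delicate ingredient is the vanishing $\hTor{n}{R}(M,Q_0)=0$ for a projective $Q_0$, which is immediate from total acyclicity of $T$ combined with flatness of $Q_0$. I expect no further obstacles; notably, this route never needs to prove directly that the complete resolution $T$ is itself periodic, bypassing the syzygy-comparison step that Remark 1 suggests.
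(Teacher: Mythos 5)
Your proof is correct, but it takes a genuinely different route from the paper's. The paper forms a \emph{minimal complete projective resolution} $T\rightarrow P\rightarrow k$ of the residue field, upgrades the eventual periodicity of $P$ to actual periodicity of $T$ (the minimality argument sketched in Remark 1), and then invokes the balance result for Tate homology ($H_n(T\otimes M)\cong H_n(k\otimes T')$, via \cite{christensen:14:tate}, where $T'\rightarrow P'\rightarrow M$ is the minimal complete resolution of $M$); the identification $H_n(k\otimes T')\cong k^{\widehat{\beta}_n(M)}$ by minimality of $T'$ then finishes. You share only that last identification: like the paper, you use that the differentials of the minimal $T'$ land in $\mathfrak{m}T'$, so $\widehat{\beta}_n(M)=\dim_k H_n(T'\otimes k)$. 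But from there you never construct a complete resolution of $k$ at all and never need any balance theorem: you dimension-shift along the minimal projective resolution of $k$, using that tensoring the totally acyclic complex $T'$ with $0\rightarrow \Omega N\rightarrow Q_0\rightarrow N\rightarrow 0$ gives a short exact sequence of complexes with acyclic middle term, so $H_n(T'\otimes N)\cong H_{n-1}(T'\otimes \Omega N)$, and then feed in the isomorphism of high syzygies $\Omega^{j}k\cong\Omega^{j+s}k$ that eventual periodicity supplies. This buys two things: your argument is self-contained (long exact sequences and flatness only), bypassing both the citation to \cite{christensen:14:tate} and the only-sketched claim of Remark 1 that minimality forces $T$ to be genuinely periodic; and it makes visible that the full strength of the hypothesis is not needed --- all that is used is that two syzygies of $k$ lying $s$ steps apart are isomorphic. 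What the paper's route buys in exchange is brevity given the balance machinery, and it puts on record the periodicity of the complete resolution of $k$ itself, which is of independent interest (and is reused in the proof of Theorem 3). One cosmetic point: your indexing is slightly off when you say that taking kernels of $(Q_{n_0+1}\rightarrow Q_{n_0})\cong(Q_{n_0+s+1}\rightarrow Q_{n_0+s})$ yields $\Omega^{n_0}k\cong\Omega^{n_0+s}k$ --- those kernels are $\Omega^{n_0+2}k$ and $\Omega^{n_0+s+2}k$ --- but this shift is immaterial to the argument.
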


\begin{proof} We let $T\rightarrow P\rightarrow k$ be a minimal complete
projective resolution of $k$. Since $P$ is eventually periodic of period
$s$ we have that the complex $T$ is periodic of period $s$. Consequently
the complex $T\otimes M$ is periodic of period $s$. This gives
that for every $n$  $H_n(T\otimes M)\cong H_{n+s} (T\otimes M)$. But
by \cite{christensen:14:tate}, $H_n(T\otimes M)\cong H_n (k\otimes T')$ for
all $n$, where $T' \rightarrow P' \rightarrow M$ is a minimal complete projective resolution of $M$.\\ Since $T' = \ldots \rightarrow T'_{n+1} \xrightarrow{t'_{n+1}} T'_n \xrightarrow{t'_n} T'_{n-1} \rightarrow \ldots$ is a homologically minimal complex, it follows (Proposition 8.1.3 of \cite{enochs:00:relative2}) that $Im(t'_n) \subset \mathfrak{m} T'_{n-1}$ for all $n$. \\
Consider the complex $k\otimes T' = \ldots \rightarrow k \otimes T'_{n+1} \xrightarrow{\alpha_{n+1}} k \otimes T'_n \xrightarrow{\alpha_n} k \otimes T'_{n-1} \ldots $. We have $\alpha_n((x+\mathfrak{m}) \otimes y) = (x+\mathfrak{m}) \otimes t'_n(y)$; by the above $t'_n(y) = rz$ with $r \in \mathfrak{m}$ and $z \in T'_{n-1}$. So $\alpha_n((x+\mathfrak{m}) \otimes y) = (x+\mathfrak{m}) \otimes rz = (xr+\mathfrak{m}) \otimes z = (0 +\mathfrak{m}) \otimes z = 0$. Thus $Im(\alpha_n) = 0$ and $Ker(\alpha_n) = k \otimes T'_n$ for all $n$. Then the $n$-th homology module of $k\otimes T'$, $H_n (k \otimes T') = \frac{Ker(\alpha_n)}{Im(\alpha_{n+1})} = k \otimes T'_n \simeq k \otimes R^{\widehat{\beta}_n} \simeq k^ {\widehat{\beta}_n}$, is a vector space of dimension $\widehat{\beta}_n$ over $k$.

%using the description of a homologically minimal $T'$
%(see Proposition 8.1.3 of Relative Homological Algebra 2) we see that
%the n-th homology module of $k\otimes T'$ is a vector space of
%dimension $\widehat{\beta}_n(M)$ over $k$.
 Since $H_n(k\otimes T')\cong
H_n (T\otimes M)$ and since $H_n (T\otimes M)\cong H_{n+s} (T\otimes M)$
for all $n$ we see that $\widehat{\beta}_n(M)=\widehat{\beta}_{n+s}(M) $ for all $n$.
%\hfill{$\square$}\\
\end{proof}

%Now let $\mathfrak m$ be the unique maximal ideal of $R$. Then we have\\
\begin{theorem} If the residue field $k$ of $R$ as an $R$-module has an eventually periodic minimal projective resolution (with period $s\geq 1$), then for
any module $N$ of finite Gorenstein injective dimension, the invariants $\widehat{\mu}^n ( {\mathfrak m}, N)$ are periodic
of period $s$.
\end{theorem}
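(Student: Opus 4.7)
The plan is to mirror the proof of Theorem 2, but with the roles of projective and injective reversed, and with the balance theorem (Theorem \ref{aside}) providing the bridge. First I would take $T\rightarrow P\rightarrow k$ to be the minimal complete projective resolution of the residue field. Because $P$ is eventually periodic of period $s$ and $T$ is homotopically minimal, Remark 1 gives that $T$ is periodic (not merely eventually periodic) of period $s$. Consequently, for any module $N$ of finite Gorenstein injective dimension, the complex $\Hom(T,N)$ is periodic of period $s$, so $H^n(\Hom(T,N))\cong H^{n+s}(\Hom(T,N))$ for every $n\in\mathbb{Z}$.

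Next I would let $N\rightarrow I\rightarrow U$ be a minimal complete injective resolution of $N$ and invoke Theorem \ref{aside} (the balance result applied to $M=k$ in the projective slot and $N$ in the injective slot) to obtain a natural isomorphism
\[
H^n(\Hom(T,N))\;\cong\;H^n(\Hom(k,U))
\]
for all $n$. Combined with the previous step this already gives $H^n(\Hom(k,U))\cong H^{n+s}(\Hom(k,U))$, so the remaining task is to identify this cohomology with the Tate-Bass invariant $\widehat{\mu}^n(\mathfrak m,N)$.

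For the identification, I would use the fact that each $U_n$ is a minimal injective module, hence decomposes as a direct sum of indecomposable injectives $E(R/\mathfrak p)$, the summand $E(R/\mathfrak m)$ appearing exactly $\widehat{\mu}^n(\mathfrak m,N)$ times (this is the Matlis/Bass setup recalled in Section 3). Since $\Hom_R(k,E(R/\mathfrak p))$ is the $\mathfrak m$-socle of $E(R/\mathfrak p)$, it vanishes for $\mathfrak p\neq\mathfrak m$ and equals $k$ when $\mathfrak p=\mathfrak m$; therefore $\Hom(k,U_n)\cong k^{\widehat{\mu}^n(\mathfrak m,N)}$ as $k$-vector spaces. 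The dual of the Nakayama-type minimality statement used in the proof of Theorem 2 (i.e., the injective analogue of Proposition 8.1.3 of \cite{enochs:00:relative2}) says that the differentials of the minimal complex $U$ send the $\mathfrak m$-socle of $U_n$ into the complement of the socle of $U_{n+1}$, so the induced maps $\Hom(k,U_n)\to\Hom(k,U_{n+1})$ are all zero. Hence $H^n(\Hom(k,U))\cong\Hom(k,U_n)\cong k^{\widehat{\mu}^n(\mathfrak m,N)}$ as $k$-vector spaces.

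Combining the three isomorphisms gives $\widehat{\mu}^n(\mathfrak m,N)=\dim_k H^n(\Hom(k,U))=\dim_k H^{n+s}(\Hom(k,U))=\widehat{\mu}^{n+s}(\mathfrak m,N)$, which is the desired periodicity. The main technical step I expect to be the crux is the injective minimality fact that the induced differentials on $\Hom(k,U)$ vanish; once this dual of the projective minimality lemma is in hand, the rest is a direct translation of the argument in Theorem 2 through the balance isomorphism of Theorem \ref{aside}.
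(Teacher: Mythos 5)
Your proposal is correct and follows essentially the same route as the paper: minimal complete projective resolution of $k$, periodicity of $T$ via Remark 1, the balance isomorphism of Theorem \ref{aside} relating $H^n(\Hom(T,N))$ to $H^n(\Hom(k,U))$, and the identification $H^n(\Hom(k,U))\cong k^{\widehat{\mu}^n(\mathfrak m,N)}$. The only difference is that the paper delegates that last identification to Bass's paper, whereas you spell out the socle/essential-kernel argument showing the induced differentials on $\Hom(k,U)$ vanish (your phrase ``into the complement of the socle'' should more precisely be ``the socle of $U_n$ lies in the kernel of the differential, since that kernel is essential''), which is exactly the content of the citation.
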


\begin{proof} Again let $T\rightarrow P\rightarrow k$ be a minimal
complete projective resolution of $k$ and let $N\rightarrow I\rightarrow
U$ be a minimal complete injective resolution of $N$. We have that
$Hom(T,N)$ and $Hom(k,U)$ have naturally isomorphic homology modules
(Theorem \ref{aside}). But $T$ is periodic of period $s$, so $Hom(T,N)$
is periodic of period $s$. So we get that
\begin{center}
{$ H^n(Hom(k, U)) \cong H^{n+s} (Hom(k, U))$}
\end{center}
for all $n$. But as in Bass (\cite{bass:63:ubiquity}) we see
that $H^n(Hom(k, U))$ is a vector space over $k$ whose dimension is
precisely $\widehat{\mu}^n ({\mathfrak m}, N)$.
%(As in \cite{bass:63:ubiquity}, proof of Lemma 2.7, the minimality of $T$ guarantees that the complex $Hom(k, U)$ has zero differentials. Hence $H^n Hom(k,U) \simeq Hom(k, U_i)$  which is a vector space of dimension $\widehat{\mu}^n(\mathfrak{m},N)$)  \hfill{$ \square$}\\
\end{proof}
%Sinem (and maybe the rest of you) knows how to fix our balance result.\\

\begin{remark}
1. %If $R$ is a hypersurface ring then the Betti sequence of any module is eventually constant (Eisenbud, 6.1). In particular, the sequence of Betti numbers of $k$ is bounded.
If M is eventually periodic, then its Betti sequence is bounded. The converse is not true in
general. A counterexample was given by R. Schulz in \cite{schultz:86:boundedness}, Proposition 4.1.  D. Eisenbud proved (\cite{eisenbud:80:complete.intersections}) that the converse does hold over group rings of finite
groups, and that it also holds in the commutative Noetherian local setting when
the rings considered are complete intersections. In fact, it was shown that over
a hypersurface (that is, a complete intersection ring of codimension one) any minimal
free resolution eventually becomes periodic.\\
So over a hypersurface ring both Theorem 2 and Theorem 3 hold.
%So Theorems 2 and 3 both hold if and only if $R$ is a hypersurface.
\end{remark}

\begin{remark} Our main results hold provided that $k$ has an eventually periodic minimal projective resolution, and so its Betti numbers are bounded. By \cite{gulliksen:69:minimal} (Corollary 1), if the Betti numbers of $k$ are bounded then $R$ is a hypersurface.\\
So Theorems 2 and 3 both hold if and only if $R$ is a hypersurface.
\end{remark}
%The only other possible topic I had thought of for this paper was using
%Matis duality applied to these minimal $T\rightarrow P\rightarrow M$ where
%$M$ is finitely generated. But maybe that could be for another paper.\\

\section{Matlis duality}

Let $(R, \mathfrak{m}, k)$ be a commutative local Gorenstein ring, and let $M$ be a finitely generated $R$-module. Then there exists a diagram $T \rightarrow P \rightarrow M$ as above, with both $T$ and $P$ homotopically minimal, and so unique up to isomorphism.\\
Then for each $n \in \mathbb{Z}$, we have $P_n = R^{\beta_n}$ and $T_n = R^ {\widehat{\beta}_n}$.\\
We show that $M^\nu \rightarrow P^\nu \rightarrow T^\nu$ is a minimal complete injective resolution of the module $M^\nu$, where $M^\nu$ denotes the Matlis dual $Hom_R(M,E(k))$.\\
- Since $E(k)$ is injective, both $P^\nu$ and $T^\nu$ are exact complexes of injective modules.\\
Let $M_j = Ker(P_{j-1} \rightarrow P_{j-2})$; then $M_j ^ \nu \subset P_j ^\nu$ is an injective preenvelope with $P_j ^ \nu = Hom (R^{\beta_j}, E(k)) \simeq E(k)^{\beta_j}$. So the injective envelope of $M_j^\nu$ is a direct summand of $E(k)^{\beta_j}$, so it is $E(k)^{t_j}$ for some $1 \le t_j \le \beta_j$.\\
Then as in the proof of \cite{enochs:00:relative}, Corollary 3.4.4, there is an exact sequence $R^{t_j} \rightarrow M_j \rightarrow 0$. Therefore $R^{t_j} \rightarrow M_j$ is a projective precover. Since $P_j \rightarrow M_j$ is a projective cover, it follows that $R^{\beta_j}$ is a direct summand of $R^{t_j}$. So $\beta_j \le t_j$, and so we have $\beta_j = t_j$ for all $j$, and $M_j^\nu \rightarrow P_j^\nu $ is an injective envelope, for all $j$.\\

Since $T = \ldots \rightarrow T_1 \xrightarrow{t_1} T_0 \xrightarrow{t_0} T_{-1} \rightarrow \ldots$ is exact with each $T_j$ finitely generated free and with each $G_{j+1} = Ker(t_j)$ Gorenstein projective, it follows that $T^\nu$ is an exact complex of injective modules, with $Ker(T_j^\nu \rightarrow T_{j+1}^\nu) = G_j^\nu$ Gorenstein injective (we have that each $G_j$ is also Gorenstein flat in this case, so $Tor_1(G_j, A) =0$, for any injective module $A$, for each $j$. Then $Ext^1 (A, G_j^\nu) = Ext^1(A, Hom(G_j , E(k)) \simeq Hom(Tor_1(G_j, A) , E(k)) =0$, for any injective module $A$. It follows that $G_j^\nu$ is Gorenstein injective). % ( PREA complicat! inelul e Gorenstein, deci e suficient ca $G_j^\nu$ are un complex exact de module injective)

Thus $T^\nu$ is a totally acyclic injective complex. As above, since $\ldots \to T_1 \rightarrow T_0 \rightarrow G_0 \rightarrow 0$ is a minimal projective resolution, it follows that $0 \rightarrow G_0^\nu \rightarrow T_0 ^\nu \rightarrow T_1 ^ \nu \rightarrow \ldots$ is a minimal injective resolution. \\

Similarly, $0 \rightarrow G_{-n-1}^\nu \rightarrow T_{-n-1}^\nu \rightarrow T_{-n}^\nu \rightarrow \ldots$ is a minimal injective resolution of $G_{-n-1}^\nu$. By \cite{enochs:00:relative}, Theorem 10.1.13, the Gorenstein injective module $G_{-n-1}^\nu$ is reduced. Thus $T_{-n-1}^\nu \rightarrow G_{-n}^\nu$ is in fact an injective cover. Similarly we have that $T_j^\nu \rightarrow G_{j+1}^\nu$ is an injective cover for each $j \le 0$.\\

Thus $ \ldots T_{-2}^\nu \rightarrow T_{-1}^\nu \rightarrow G_0^\nu \rightarrow 0$ is a minimal left injective resolution.

%De Demonstrat!!!

So we have $M^\nu \rightarrow P^\nu \rightarrow T^\nu$ a complete injective resolution of $M^\nu$ with both $P^\nu$ and $T^\nu$ minimal.\\
We have $P_n^\nu = E(k)^{\beta_n}$, and $T_n^\nu = E(k)^{\widehat{\beta}_n}$, for each $n$. It follows that for each $n$ we have $\widehat{\mu}_n(\mathfrak{m}, M^\nu) = \widehat{\beta}_n$. Also, $\mu_n(\mathfrak{m}, M^\nu) = \beta_n$ for each $n \ge 0$.

\bibliographystyle{plain}
%\bibliography{references}

\end{document}